\documentclass[12pt]{article}
\usepackage[active]{srcltx}
\usepackage{graphicx}
\usepackage[centertags]{amsmath}
\usepackage{amsfonts}
\usepackage{amssymb}
\usepackage{amsthm}
\textheight=9.4in
\voffset=-1in

\hoffset-18mm
\textwidth=160mm

\newcommand{\Title}[1]{{\Large \bf \begin{center} {#1}
  \end{center}}\vspace*{1mm}}
\newcommand{\Author}[1]{{\bf \begin{center} {#1}
  \end{center}}\vspace*{1mm}}
\newtheorem{thm}{Theorem} 

\newtheorem{defn}[thm]{Definition}


\newcommand{\prmb}[2]{\left\{ \begin{array}{ll} {C_1:}  & {#1} \\  {C_2:}  &  {#2} \end{array}   \right.}

\begin{document}
\Title{\Large Combinatorial Maps with Normalized Knot} \Author{ Dainis ZEPS \footnote{This research was supported in part by the Ministry of Education and Science
Republic of Latvia, Project 09.1247.}
\footnote{Author's address: Institute of Mathematics and Computer
Science, University of Latvia, 29 Rainis blvd., Riga, Latvia.
{dainis.zeps@lumii.lv}} }

\begin{abstract}
We consider combinatorial maps with fixed combinatorial knot numbered with augmenting numeration called normalized knot. We show that knot's normalization doesn't affect combinatorial map what concerns its generality. Knot's normalization leads to more concise numeration of corners in maps, e.g., odd or even corners allow easy to follow distinguished cycles in map caused by the fixation of the knot. \\
Knot's normalization may be applied to edge structuring knot too. If both are normalized then one is fully and other partially normalized mutually.
\end{abstract}

{\bf Keywords}: graphs on surface, zigzag walk, permutations, combinatorial maps,  combinatorial knots.

\section{Introduction}
Our approach of combinatorial maps is based on works \cite{he 1891,co 75,sta 78,sta 88,co 75} and developed in series of articles \cite{ze 94,ze 96,ze 97,ze 08}, online book \cite{ze 04} and PhD thesis \cite{ze 98}.

Let us use definitions introduced in works \cite{ze 94,ze 98,ze 04,ze 08}. Normalized combinatorial map is defined by arbitrary permutation $P$, i.e., $P$ is vertex rotation of the combinatorial map. Its inner edge rotation is involution $\pi = (1 2)(34) ... (2m-1)(2m)$ where $m$ is number of edges in the map. Face rotation $Q$ of the map we get by multiplication $P$  by $ \pi$: $$Q = P \times \pi.$$ Correspondingly, edge involution $\rho$ we get by multiplication $P$ by $Q^{-1}$: $$\rho = P \times Q^{-1},$$
or $$\rho = \pi^{P^{-1}}.$$

Combinatorial knot $\mu$ is defined as alternating application of $\pi$ and $\rho$ \cite{ze 98,ze 96}. Combinatorial knot may be written in the form \cite{ze 98} $$\mu=\prmb \pi \rho,$$
that should be read in the following way: by fixing knot $\mu$ corner set $C$ is partitioned into two parts, correspondingly $C_1$ and $C_2$, where, in order to get knot $\mu$, $\pi$ is applied from $C_1$ to $C_2$ and $\rho$ -- conversely.

\begin{figure}[h]
\begin{center}
\scalebox{.35}{\includegraphics{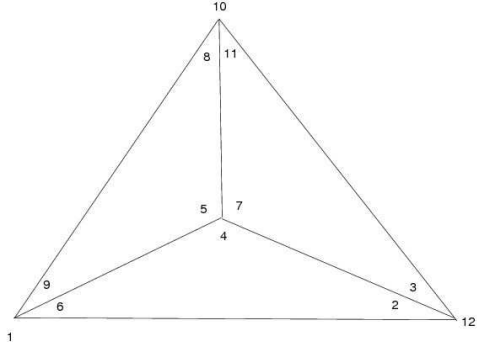}}
\caption{Example of combinatorial map with fixed knot. $P=(1 9 6)(2 3 \bar{2})(4 5 7)(8 \bar{0} \bar{1})$, $Q=1 \bar{0} \bar{2})(2 4 6)(3 \bar{1} 7)(5 8 9)$, edge involution is $(1 8)(5 \bar{1})(2 7)(6 \bar{2})(3 \bar{0})(4 9)$, knot is $\mu=(2 1 7 8)(4 3 \bar{0} 9)(6 5 \bar{1} \bar{2})$. Corners are partitioned into subsets $C_1=\{1,3,5,7,9,\bar{2}\}$ and $C_2=\{2,4, 6,8,\bar{0},\bar{1}\}$. Inner edge involution is partitioned into cut edges $\pi_1=(7 8)(9 \bar{0})(\bar{1} \bar{2})$ and cycle edges $\pi_2=(1 2)(3 4)(5 6)$. Fixed by  knot cycles in map $P$ are $\gamma_1=(1 9 5 7 3 \bar{2})$ and $\gamma_2=(2 4 6)(8 \bar{0} \bar{1})$. }
\label{fig1}
\end{center}
\end{figure}

By fixing knot in combinatorial map, it is convenient to speak about coloring of corners in two colors , correspondingly one color for corners in $C_1$, say, green, and other color for corners in $C_2$, say, red.

\begin{figure}[h]
\begin{center}
\scalebox{.30}{\includegraphics{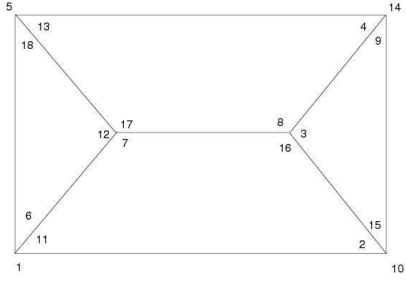}}
\caption{Example of combinatorial map with normalized knot. $P=(1 6 \bar{1})(2 \bar{5} \bar{0})(3 \bar{6} 8)(4 \bar{4} 9)(5 \bar{3} \bar{8})(7 \bar{2} \bar{7})$.  Knot's characteristic $\langle 9 \rangle$, i.e., compact knot. Inner edge involution is partitioned into cut edges $\pi_1=(1 2)(7 8)(\bar{3} \bar{4})$ and cycle edges $\pi_2=(3 4)(5 6)(9 \bar{0})(\bar{1} \bar{2})(\bar{5} \bar{6})(\bar{7} \bar{8})$. Fixed by  knot cycles in map $P$ are $\gamma_1=(1 5 \bar{3} \bar{7} 7 \bar{1})(3 \bar{5} 9)$ and $\gamma_2=(2 \bar{6} 8 4 \bar{4} \bar{0})(6 \bar{2} \bar{8})$. Symmetric knotting $A$ is $(1 5 \bar{3} \bar{7} 7 \bar{1})(2 6 \bar{4} \bar{8} 8 \bar{2})(3 \bar{5} 9)(4 \bar{6} \bar{0})$. Edge involution corresponding to possible value of square root $\nu_1 = (\sqrt{A})_1 = \pi^{-\gamma_1}$   is $(1 6)(2 \bar{1})(3 \bar{6})(4 9)(5 \bar{4})(7 \bar{2})(8 \bar{7})(\bar{0} \bar{5})(\bar{3} \bar{8})$. $\delta (=\pi^{\gamma_1}=\nu_2=(\sqrt{A})_2)$ is $(1 \bar{2})(2 5)(3 \bar{0})(4 \bar{5})(6 \bar{3})(7 \bar{8})(8 \bar{1})(9 \bar{6})(\bar{4} \bar{7})$. Edge structuring knot is $\varepsilon=(1 2 \bar{1} \bar{2} 7 8 \bar{7} \bar{8} \bar{3} \bar{4} 5 6)(3 4 9 \bar{0} \bar{5} \bar{6})$, with knot's characteristic $\langle6,3\rangle$.}
\label{fig2}
\end{center}
\end{figure}

Fixing knot in combinatorial map edges are partitioned into cut edges and cycle edges, correspondingly, $\pi_1$ and $\pi_2$, so that $P \pi_1$, considered as new map, has alternatingly colored orbits and has only cycle edges and $P \pi_2$ similarly considered has mono colored orbits and has only cut edges, and $\pi = \pi_1 \pi_2$. See examples of combinatorial maps with fixed knot in figures \ref{fig1} and \ref{fig2}. Map $P$ may be expressed in form $P=\gamma_1 \gamma_2 \pi_2$, where $\gamma_1$ and $\gamma_2$ are  acting on sets $C_1$ and $C_2$ correspondingly.

Combinatorial map $(P,Q)$ may be expressed as $$P= \mu \times \alpha = \mu \times A \times \pi_1,$$
where $\alpha$ is called {\em knotting} \cite{ze 94}, and $A=\alpha \times \pi_1$ is knotting in symmetric form, i.e., $A=\gamma_1 \gamma_1^\pi$ \cite{ze 09}.

Combinatorial knot may be expressed in form \cite{ze 09} $$\mu=\gamma_2 \pi \gamma_1^{-1}.$$

More useful expressions for combinatorial maps may be found in \cite{ze 09}.

\subsection{Edge structuring knot}

We say that two knots are {\em equivalent} if eventual change of orientation of some orbits lead to equality or them.

We are going to use following theorem.

\begin{thm} Let knot $\varepsilon$ be built from involutions $\pi$ and $\delta = (\pi^{\gamma_1})$: $\varepsilon=\prmb \pi \delta$.
Let knot $\nu$ be one of square roots of $A$, i.e., $\nu^2=A$. Then $\nu$ is equivalent to $\varepsilon$.
\end{thm}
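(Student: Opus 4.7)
My plan is first to check directly that $\varepsilon$ is a square root of $A$, and then to argue that any knot $\nu$ with $\nu^2 = A$ must share the orbit supports of $\varepsilon$, so that only the orientation of orbits can distinguish them, which by definition is the equivalence of the statement.

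For the direct calculation, recall from $P = \gamma_1\gamma_2\pi_2$ that $\gamma_1$ is supported on $C_1$, and that $\pi$ is an involution exchanging $C_1$ and $C_2$, so $\gamma_1^\pi = \pi\gamma_1\pi$ is supported on $C_2$. Using $\delta = \pi^{\gamma_1} = \gamma_1\pi\gamma_1^{-1}$, I compute on each block. For $c\in C_1$, $\varepsilon(c)=\pi(c)\in C_2$, and $\gamma_1^{-1}$ fixes $\pi(c)$, so
$$\varepsilon^{2}(c) \;=\; \delta(\pi(c)) \;=\; \gamma_1\pi\gamma_1^{-1}\pi(c) \;=\; \gamma_1(c).$$
For $c\in C_2$, $\varepsilon(c)=\delta(c)=\gamma_1\pi(c)\in C_1$, and
$$\varepsilon^{2}(c) \;=\; \pi(\gamma_1\pi(c)) \;=\; \gamma_1^\pi(c).$$
Since the two factors have disjoint supports, these case-formulae combine into $\varepsilon^{2} = \gamma_1\gamma_1^\pi = A$. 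The same computation exhibits the $\varepsilon$-orbits: each $k$-cycle $(a_1,\ldots,a_k)$ of $\gamma_1$ is threaded with its $\pi$-image $k$-cycle on $C_2$ into the single $2k$-cycle $(a_1,\pi a_1,a_2,\pi a_2,\ldots,a_k,\pi a_k)$.

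For the uniqueness step, by definition of a knot $\nu$ alternates between $C_1$ and $C_2$, so every $\nu$-orbit has even length $2\ell$ and squares to a pair of $\ell$-cycles, one on each colour class. The hypothesis $\nu^2 = A$ then forces this pairing to coincide with the one built into $A = \gamma_1\gamma_1^\pi$, namely the $\pi$-pairing between a $C_1$-cycle of $\gamma_1$ and its $\pi$-image on $C_2$. Hence $\nu$ and $\varepsilon$ partition the corners into the same orbit supports, and the only remaining freedom is the direction in which each $2k$-cycle is traversed, i.e.\ reversal of orientation of some orbits, which is precisely the equivalence defined at the start of the subsection. The main obstacle is this matching of orbit supports: one needs both that $\nu$ is a knot (so that $\pi$ is the sole mechanism for crossing between colour classes) and that $A$ is specifically $\gamma_1\gamma_1^\pi$, in order to rule out $\nu$-orbits that pair a $C_1$-cycle of $\gamma_1$ with some $C_2$-cycle other than its $\pi$-image.
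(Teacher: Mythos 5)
Your first half is correct and is, in substance, the paper's entire proof: the paper also just squares $\varepsilon=\prmb{\pi}{\pi^{\gamma_1}}$ blockwise, getting $\gamma_1$ on $C_1$ and $\gamma_1^{\pi}$ on $C_2$, hence $\varepsilon^2\cong A$; your case-by-case computation is a cleaner rendering of that. (Your convention $\pi^{\gamma_1}=\gamma_1\pi\gamma_1^{-1}$ is opposite to the paper's $x^y=y^{-1}xy$, cf. $\rho=\pi^{P^{-1}}=P\pi P^{-1}$, but this only swaps the two roots $\pi^{\gamma_1}$ and $\pi^{\gamma_1^{-1}}$ and is harmless up to the stated equivalence.)

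The genuine gap is in your uniqueness half, which the paper silently omits and which you do not actually close. Two of your assertions are unproved and, as stated, false. First, $\nu^2=A$ does not force a $\nu$-orbit to join a $\gamma_1$-cycle on $C_1$ with its $\pi$-image: it only forces it to join it with \emph{some} $A$-cycle on $C_2$ of the same length, since interleaving $(a_1,\dots,a_k)$ with any equal-length $C_2$-cycle of $A$ yields a permutation squaring to $A$ on that orbit. Second, even when the orbit supports agree, ``same square'' does not reduce the remaining freedom to orientation reversal: take $\gamma_1=(a_1\,a_2\,a_3)$, $b_i=\pi(a_i)$, so $A=(a_1\,a_2\,a_3)(b_1\,b_2\,b_3)$; then both $(a_1\,b_1\,a_2\,b_2\,a_3\,b_3)$ and the shifted cycle $(a_1\,b_2\,a_2\,b_3\,a_3\,b_1)$ square to $A$ and have the same single orbit, yet they are not related by reversal --- indeed reversal of an orbit inverts the square, so it can never carry one root of $A$ into another root of $A$ on an orbit where $A$ has order greater than $2$. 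You flag exactly this obstacle in your last sentence (``one needs that $\nu$ is a knot''), but you never use the knot hypothesis; the missing input is the restriction, implicit in the paper and in Figure~\ref{fig2}, that a knot square root applies the fixed involution $\pi$ on one colour class, i.e.\ $\nu=\prmb{\pi}{x}$ (or $\prmb{x}{\pi}$) with $x$ an involution. Under that restriction $\nu^2=A$ determines $x$ on $C_2$ to be $\pi$ conjugated by $\gamma_1$ (up to the orientation ambiguity $\gamma_1\leftrightarrow\gamma_1^{-1}$, i.e.\ $\pi^{\gamma_1}$ versus $\pi^{\gamma_1^{-1}}$), which gives exactly $\varepsilon$ up to equivalence. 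Without some such restriction the shifted roots above are genuine non-equivalent square roots, so the support-matching argument you sketch cannot succeed as written.
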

\begin{proof} We use equalities $A=\gamma_1 \gamma_1^\pi$ \cite{ze 09}, and $\varepsilon= \prmb \pi {\pi^{\gamma_1}} = \prmb {\gamma_2 \pi} {\pi \gamma_1}$.
Let us square knot $\varepsilon$:
$$\varepsilon^2 = \prmb {\gamma_2 \pi \pi \gamma_1} {\pi \gamma_1 \gamma_2 \pi} = \prmb {\gamma_2 \gamma_1} {\gamma_1 \gamma_2} \cong A.$$
\end{proof}

We call knot $\varepsilon=\prmb \pi \delta$ {\em edge structuring knot}.

\section{Zigzag walk as an invariant in the graph on orientable surface}

\begin{figure}[h]
\begin{center}
\scalebox{.40}{\includegraphics{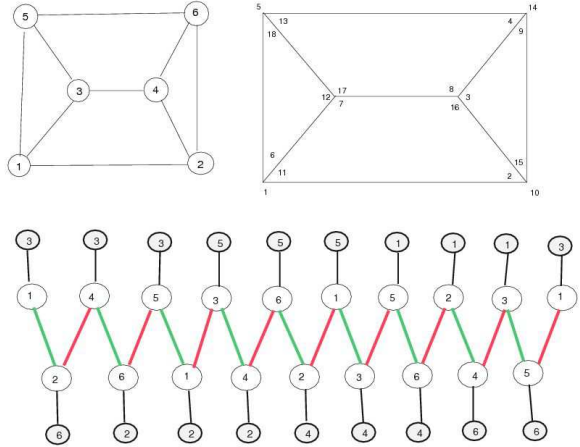}}
\caption{Compact knot example. Zigzag walk in envelope graph $\bar{C}_6$ and corresponding combinatorial map with normalized knot.}
\label{fig3}
\end{center}
\end{figure}

Combinatorial maps has in correspondence graphs on surfaces, i.e., graphs with fixed neighboring edge orders or edge (or adjacent vertex) rotations around vertices. Similarly, other rotational functions of combinatorial maps may have graph theoretical objects, and even several, in correspondence. Combinatorial knot \cite{ze 98,ze 94,ze 08} has in correspondence so called {\em zigzag walk} \cite{bon 95,rore 78} in the graph on orientable  surface.

Zigzag walk in graph may be easily defined. Edges in the walk are to be visited two times and colored with two different colors, say, green and red. Let an edge have not yet received green color: let choose direction on this edge, and start with coloring it green, and choose most left edge in the neighborhood of head, and color it red, and then choose most right edge in neighborhood of head of new edge, and color it green, and so on until walk closes. Let us say that zigzag walk has $k$ components if  we must resume walk $k$ times to cover all edges with two colors. Let zigzag walk as object be  $k$, in number, cyclical sequences of colored edges, or rotation on doubled set of edges with $k$ orbits. We must observe that coloring in zigzag walk edges alternatingly green and red, we choose them most left and most right edge alternatively in the neighborhood of the head of the walk.

\begin{figure}[h]
\begin{center}
\scalebox{.60}{\includegraphics{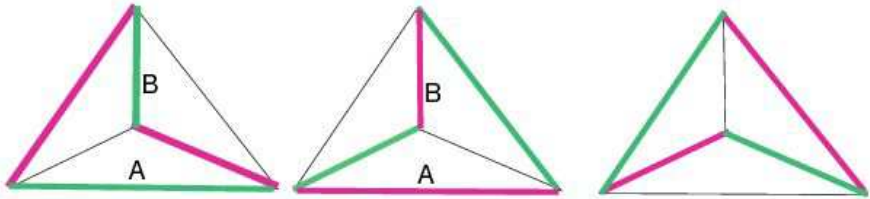}}
\caption{Example of zigzag walk perambulation with three components in $K_4$ graph. We must notice that zigzag walk doesn't give rotation of ordered set of edges: edge $A$ in first and second component perambulated in different directions gives for edge $B$ identic orientations. Combinatorial map theory says that cut edges are visited in different directions, and cycle edges in coincident directions. Otherwise, it is easy to see that each edge is visited just two times. }
\label{fig4}
\end{center}
\end{figure}

\begin{thm} Zigzag walk always ends with all edges in graph being visited just two times with two different colors.
\end{thm}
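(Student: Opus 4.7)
The plan is to interpret the zigzag walk as an orbit trace of the combinatorial knot $\mu$ acting on the corner set $C$ of the underlying combinatorial map, and then read the theorem off from the orbit structure of $\mu$ together with the partition $C = C_1 \sqcup C_2$ used to define the knot.

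First I would set up the dictionary explicitly. A single step of the walk --- entering an edge from one end and coloring it with the current color --- corresponds to a single element of $C$. Crossing to the opposite end of the just-coloured edge is the involution $\pi$, and choosing the next edge via the prescribed ``most left'' or ``most right'' turn at the head is the involution $\rho$; the alternation of the two turning rules matches the alternation between $C_1$ (green) and $C_2$ (red). Consequently a connected component of the zigzag walk coincides with one cycle of $\mu = \prmb \pi \rho$ acting on $C$.

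With the dictionary in hand the proof is essentially two lines. Since $\mu$ is a permutation of the finite set $C$ of size $2m$, its $k$ cycles partition $C$; so resuming the walk $k$ times traces out every corner exactly once. Each edge contributes exactly two corners (the corresponding $\pi$-orbit), hence every edge is visited exactly twice. Since $\pi$ sends $C_1$ to $C_2$, the two corners of any edge lie in opposite color classes, so one of the two visits is coloured green and the other red.

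The only delicate step is the translation itself: checking that the geometric prescription ``alternately take the leftmost and the rightmost edge at the head of the current edge'' on a graph embedded on an orientable surface really matches, with respect to the rotation system $P$ and the ambient orientation, the algebraic alternation of $\pi$ and $\rho$ on corners. This is a local verification at each vertex using $P$, $\pi$, and $\rho = \pi^{P^{-1}}$, and I expect the bulk of the proof to be spent making this identification rigorous; once it is in place the orbit-counting argument above finishes the theorem immediately.
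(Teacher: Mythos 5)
Your route is genuinely different from the paper's: the paper proves this theorem by a direct parity argument inside the graph (each component, restricted to the edges it actually uses, is a closed walk whose edges are coloured alternately, hence of even length, so green and red visits pair up), with no appeal to the map formalism; you instead push everything through the corner/knot dictionary, which the paper only establishes afterwards, in the theorem on the correspondence between the combinatorial knot and the zigzag walk. That reordering is not circular in itself, but it means the ``delicate step'' you defer is not a side issue --- it is the whole content --- and the dictionary you sketch for it is not the right one. In the knot $\mu$ built by alternating $\pi$ and $\rho$, \emph{both} kinds of steps are edge traversals: the paper's convention is that a $\pi$-step is the green visit of an edge and a $\rho$-step is the red visit, the left/right turning being already encoded in $\rho=\pi^{P^{-1}}$. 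You instead take $\pi$ to be ``crossing the current edge'' and $\rho$ to be ``the turn''; under that reading every one of the $2m$ edge-visits would be a $\pi$-step, yet only $m$ of the $2m$ knot steps are $\pi$-steps, and a walk component would then be an orbit of $\pi\rho$ rather than of $\mu$, so the claimed match with the partition $C_1\cup C_2$ breaks down.

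The mis-identification propagates into your counting: the two visits of an edge do \emph{not} correspond to the two corners of its $\pi$-transposition. The green visit is sourced at the $C_1$-corner of the edge's $\pi$-pair, while the red visit is sourced at the $C_2$-corner of the different $\rho$-transposition attached to that edge; the conclusion ``exactly one green and one red visit per edge'' needs both bijections (edges with $\pi$-pairs and edges with $\rho$-pairs) together with the fact that $\pi$ maps $C_1$ bijectively onto $C_2$ and $\rho$ maps $C_2$ bijectively onto $C_1$. Finally, you silently assume that the alternating $\pi$/$\rho$ trajectory is a permutation of $C$ with a well-defined two-colouring of corners, i.e.\ that no corner is ever revisited in the opposite phase; on the graph side this is exactly the assertion that no edge receives the same colour twice, which is the substance of the statement being proved, so it must be proved (or explicitly imported from the earlier works the paper cites) rather than assumed. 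The paper's own proof avoids all of this by staying entirely inside the graph; your plan can be repaired, but only by carrying out the corrected correspondence in full, which is essentially the later theorem of the paper.
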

\begin{proof} Let us observe that zigzag walk in even cycle has two components visiting each edge once, but in odd cycle it has one component visiting each edge twice. Thus, in each component zigzag walk has to visit even number of edges. The same remains true for arbitrary graph, because we may eliminate all edges not visited by the walk leaving only cycle (or closed walk) that zigzag walk has just perambulated. The graph that was perambulated is a special graph that may be perambulated with one cycle not visiting edge more than twice. As in case of simple cycles walk starting with green edge are to end with red edge, [because sparse edges as neighboring are alternatingly on both side of the walk, and walk should be closed in the same way, scilicet, edges colored alternatingly, thus being of even length].
\end{proof}

Zigzag walk is graph theoretical invariant if directions in walk are ignored.

\subsection{Equivalence classes of edges caused by zigzag walk}
Zigzag walk is graph theoretical invariant of the graph in sense that, fixing vertex rotation in the graph, zigzag walk is fixed uniquely up to choosing direction of the walk in each of its components. Thus, if zigzag walk has $k$ components, then actually $2^k$ different zigzag walks of varying directions are possible.

In response to this fact let us say that edges colored in one color during one component of zigzag walk are equivalent. Then zigzag walk causes $2 \times k$ equivalence classes of edges. The colors in each separate component may be interchanged thus giving $2^k$ possible zigzag walks in $k$ components. We notice thus that one equivalence classes' partitioning, corresponding to zigzag walk in general, gives $2^k$ formally different zigzag walks by taking into account order of perambulation of edges in each zigzag walk. To have convenient graph invariant it is more useful to consider these $2^k$ different zigzag walks as one graph invariant. Thus, we call zigzag walk in general all class of zigzag walks that have in correspondence the same equivalence classes of partitioning of doubled edge set of the graph. Particular zigzag walk would be rotation on doubled edge set with $k$ orbits, but zigzag walk in general would be the same rotation with ignoring order of rotations in orbits.

Thus, we come close to consider zigzag walk as combinatorial map expression. But we must notice that we can't replace doubled edge set with the  set of oriented edge  because zigzag walk doesn't guarantee edge's perambulation in both directions for all edges.

Example of zigzag walk of the graph $K_4$ that consists from three components is shown in figure \ref{fig1}. Graph $\overline{C_6}$, envelope graph, has one component zigzag walk, see figure \ref{fig2}. Dual graph $W_4$ should have the same property.

We are encouraged to think in way that vertex rotation causes another rotational relation, scilicet, zigzag walk in the graph. If it would work reversely it would allow to consider zigzag walk rotation as basic, and other rotational relations as secondary. We are going to show that this works.

\subsection{Computation of zigzag walk in graph on surface}

It is very instructive to look on procedure of calculation of zigzag walk in order to reveal its rotational character.

Let $E$ is set of edges of the graph and functions next(a,b) and previous(a,b) give next and previous cyclic neighbors for vertex $a$ with respect to vertex $b$.

\textbf{procedure of zigzag walk:}

\textbf{instr 1:} \textbf{while} edge $(a,b)$ is found that is not yet colored green:

\{\textbf{instr 2:} \textbf{while} (edge $(next(b,a),b)$ is not colored green) \textbf{repeat}:

\{color edge $(a,b)$ green;

color edge $(next(b,a),b)$ red;

\textbf{set} $a:=next(b,a)$;

\textbf{set} $b:=previous(a,b)$

\}\}

We initialize procedure with setting all edges being without any color. Procedure ends when all edges are colored at least twice. We should mark that edges receive both colors.

It is easy to see that this procedure is purely rotational. Scilicet, rotational functions \textit{next} and \textit{previous} cause rotational character of all procedure of perambulation of zigzag walk.

Procedure gives $2^k$ different results if instruction \textbf{instr1} runs $k$ times during computation of zigzag walk.

\section{Combinatorial knot correspondence to zigzag walk}

Combinatorial map theory may only fix rotational facts. In setting correspondence between, say, graph theory and combinatorial map theory, we are able to fix purely rotational type facts/theorems in the first. We show this for case of zigzag walk in the graph with fixed vertex rotations.

\begin{thm} Combinatorial knot of combinatorial map has zigzag walk in the corresponding graph on the orientable surface.
\end{thm}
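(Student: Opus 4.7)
The plan is to establish an explicit correspondence between the orbits of the combinatorial knot $\mu$ and the components of the zigzag walk in the graph on the orientable surface associated to the combinatorial map. The natural bookkeeping device is the corner set $C$ itself: each corner is a (vertex, edge)-incidence, i.e.\ a half-edge, and its membership in $C_1$ or $C_2$ plays the role of the ``green / red'' colour of the current visit in the walk. Thus the doubled coloured edge set on which the zigzag walk is a rotation is in bijection with the set on which $\mu$ acts, and the task reduces to checking that one step of $\mu$ coincides with one step of the procedure of Section~2.2.

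The core of the argument is to translate the two generators of $\mu$ into moves of the walk. The inner edge involution $\pi$ swaps the two corners on opposite ends of a single edge, so it encodes ``cross the current edge to its head''. The edge involution $\rho = \pi^{P^{-1}} = P\pi P^{-1}$ is the same crossing conjugated by one rotational step of $P$, so applying $\rho$ in place of $\pi$ amounts to crossing after shifting the rotation by one position at the head vertex, i.e.\ selecting the rotational neighbour in the opposite sense. Alternating $\pi$ on $C_1$ with $\rho$ on $C_2$ thus implements the alternation between the ``most left'' and ``most right'' choices prescribed by the zigzag procedure. I would cross-check this using the partition of $\pi$ into cut edges $\pi_1$ and cycle edges $\pi_2$ together with the decomposition $P = \gamma_1 \gamma_2 \pi_2$ recalled in Section~1: cut edges force a change of colour class at the head while cycle edges keep it, matching the empirical observation (Figure~\ref{fig4}) that cut edges are walked in opposite directions and cycle edges in coincident directions. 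An inductive walk along any orbit of $\mu$ then reproduces, step by step, the sequence of coloured half-edge visits emitted by the zigzag procedure, and the orbit closes exactly when the corresponding walk component closes. Summing over orbits and invoking Theorem~2, one concludes that the $\mu$-orbits partition $C$ in the same way the walk partitions the doubled coloured edge set.

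The main obstacle I expect is the careful bookkeeping of directions. The zigzag procedure is stated with an external ``most left / most right'' convention at each vertex, whereas $\mu$ is purely algebraic, so one must fix once and for all a convention for ``left'' (a choice of orientation of the rotation $P$) and then verify that $\pi$ and $\rho$ realise the required alternation simultaneously at both endpoints of every edge, so that the $C_1 / C_2$ partition built into $\mu$ genuinely coincides with the green/red partition produced by the walk. Once this convention is pinned down, the remaining verification is a direct unpacking of the definitions of Section~1 and the procedure of Section~2.2.
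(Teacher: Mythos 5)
Your proposal is correct and follows essentially the same route as the paper's proof: you identify corners with coloured half-edge visits (the $C_1/C_2$ partition playing the green/red role) and check that applying $\pi$ and $\rho$ alternately along an orbit of $\mu$ reproduces, step by step, the left/right coloured moves of the zigzag procedure, with the orientation convention fixed once at the start. The paper does the same thing, only phrased via the correspondence of each graph edge to a $\pi$-transposition and a $\rho$-transposition linked by $P$, and it likewise dismisses termination as immediate for permutations.
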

\begin{proof} Let $\mu=\prmb \pi \rho$ be combinatorial map with inner edge rotation $\pi$ end edge rotation $\rho$. Both are involutions without isolated points. Thus, orbits in $\mu$ are of even length with edges from involutions $\pi$ and $\rho$ alternatingly. Let us have graph $G$ that corresponds to combinatorial map $P$  and the knot $\mu$ is fixed. Then each edge $(a,b)$ of graph have in correspondence transposition $(a_1,b_1)$ from involution $\pi$, and transposition $(a_2,b_2)$ from involution $\rho$. Then must hold that graph vertices $a$ and $b$ have in correspondence vertex rotation that $a_2^P=a_1$, and $b_2^P=b_1$ holds, [or $a_2^P=b_1$, and $b_2^P=a_1$]. Let us assume  first case being right. If we traverse zigzag walk in $P$ taking this edge, corresponding to $(a,b)$, as belonging to $\pi$, we color edge in graph green, and next edge choose from $\rho$, turning as if left, if edge belongs to $\rho$, then we color it red, and turn as if right by choosing next edge from $\pi$. It is easily seen that we repeat in combinatorial map just the same procedure that in case of traveling graph by zigzag walk. The fact that this process ends as expected, in case of permutations is trivial.
\end{proof}

Further we see that combinatorial knot is fixed only up to orientation of orbits: changing orientation of orbits in $\mu$ give as if different knot that in graph would belong to the same equivalence class of zigzag walks, or the same walk in general.

Let us accept convention already used in the proof: edges that correspond to action of $\pi$ we color green, but edges of $\rho$ -- correspondingly we color red.

\subsection{Zigzag walk as green-red- or left-right- or $\pi$-$\rho$-walk}

Establishing correspondence between combinatorial knot in combinatorial map and zigzag walk in graph on surface we establish actually that the same procedure in the graph on surface may be considered as alternation both of direction or color of edge or belonging to involutions $\pi$ and $\rho$. Thus, we get the same zigzag walk process  as left-right-walk, if choosing direction of walk, as red-green-coloring, when choosing color for current edge in walk, and as $\pi$-$\rho$-walk in combinatorial map that as map  corresponds to the graph.

It is convenient to consider combinatorial map definitions as basic where graph theoretical relations are interpretations of the first. This may serve as methodological basis for applications of similar nature.

\section{Normalization of combinatorial knot}

We have come to main point of this work. In work \cite{ze 94} we introduced normalized combinatorial maps assuming for involution $\pi$ some fixed form: we chose increasing numeration, i.e., $\pi = (1 2)(3 4)(5 6)... (2m-1 2m)$ for $m$ edges in geometrical combinatorial map. It was done by purely practical reason: to fix edge set for a graph we don't need double numbering system as it appears to be necessary in combinatorial map approach with two edge involutions, scilicet, $\pi$ and $\rho$. Choosing one of them fixed, say, $\pi$, we may vary with other. Fixing involution $\pi$ leads to one more consequence. If in case of arbitrary involution $\pi$ we need two permutations, say, pair $(P,Q)$,  to characterize combinatorial map, then in case of fixed involution $\pi$ we need only one permutation, scilicet, $P$, because $Q$ may be computed, i.e., $Q = P \times \pi$.
The one more consequence is even more crucial. With fixing involution $\pi$ we establish one-one map between even permutations and combinatorial maps.

Let us fix this fact as theorem.

\begin{thm}\label{th5} Between combinatorial maps and even permutations may be established one-one map.
\end{thm}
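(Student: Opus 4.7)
The plan is to leverage the normalization $\pi = (12)(34)\cdots(2m-1,2m)$ to represent each combinatorial map by a single permutation on the corner set $\{1,\ldots,2m\}$, and then to match these representatives bijectively with the alternating group $A_{2m}$.

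First I would recall the encoding already set up in the paper: once $\pi$ is in normalized form, the combinatorial map is fully determined by its vertex rotation $P$, since $Q = P\pi$, $\rho = \pi^{P^{-1}}$, $\mu = \gamma_2 \pi \gamma_1^{-1}$, and the induced decomposition $P = \gamma_1 \gamma_2 \pi_2$ are all recoverable from $P$ alone. This gives a well-defined injection from combinatorial maps into $S_{2m}$, so it remains only to pin down the image.

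Second, to land the encoding inside $A_{2m}$ rather than all of $S_{2m}$, I would introduce a canonical parity-selection step coming from an intrinsic symmetry of the normalization. The idea is to pair each $P$ with a $\tilde P$ of opposite parity via a natural involution on $S_{2m}$ --- for instance, composition with a distinguished odd element in the centralizer of $\pi$, which amounts to swapping the two symbols within a fixed edge pair $\{2i-1,2i\}$. The two members of each resulting fiber should encode the same combinatorial structure modulo a cosmetic $\mathbb{Z}/2$ relabeling of one edge's sides; declaring each fiber to be a single combinatorial map and then picking the unique even representative yields the bijection with $A_{2m}$. The counts match, since $|S_{2m}|/2 = (2m)!/2 = |A_{2m}|$.

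The main obstacle is to justify that the paired permutations genuinely represent the same combinatorial map rather than two structurally distinct ones. I expect this to follow from the edge-side ambiguity implicit in the pair representation $\pi = (12)(34)\cdots$: within each pair the two labels designate the two corners of a single edge, and the natural edge-reversal symmetry relabels the vertex, face, and knot orbits consistently while flipping $\mathrm{sign}(P)$. Once this equivalence is installed and the even representative is canonically chosen, the inverse construction $\tilde P \mapsto (Q = \tilde P \pi,\; \rho = \pi^{\tilde P^{-1}})$ is explicit, and injectivity, surjectivity, and well-definedness all reduce to routine verifications against the formulas already collected in the introduction.
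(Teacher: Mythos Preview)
Your proposal diverges substantially from the paper's argument and, more importantly, the extra machinery you introduce does not work.

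The paper's proof is a two-line observation: once $\pi$ is fixed in normalized form, the pair $(P,Q)$ is determined by $P$ alone via $Q=P\pi$, so the assignment $P\mapsto (P,P\pi)$ is the desired bijection. No quotient, no parity-selection step, no pairing of permutations is performed; the word ``even'' is used loosely (the paper elsewhere counts normalized maps as $(2m-1)!$, which is neither $|S_{2m}|$ nor $|A_{2m}|$), and the proof makes no attempt to land in the alternating group.

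Your second step, by contrast, proposes to pair each $P$ with $\tilde P = P\tau$ for a fixed transposition $\tau=(2i{-}1,2i)$ in the centralizer of $\pi$, and to declare the two members of each pair to encode the same combinatorial map. This is the gap. Multiplication by $\tau$ changes the cycle structure of the vertex rotation: for instance with $m=2$, $\pi=(12)(34)$, and $P=(1234)$ one gets $P\tau=(234)$, a map with an isolated corner, certainly not a ``cosmetic relabeling'' of the one-vertex map $P$. Conjugation by $\tau$ would at least produce an isomorphic labeled map, but conjugation preserves parity and so cannot serve as your parity-flipping involution. In short, there is no natural fixed-point-free involution on $S_{2m}$ that simultaneously (i) reverses sign and (ii) identifies combinatorially equivalent maps in the paper's normalized sense, because normalized maps are labeled objects and distinct permutations $P$ are, by definition, distinct maps.

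So the honest content of the theorem is exactly your first paragraph; drop the parity-selection apparatus entirely and you recover the paper's proof.
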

\begin{proof} Let us fix involution $\pi$, for all class of combinatorial maps being the same. Combinatorial map $(P,Q)$ may be characterized with one permutation, say, $P$, where second may be calculated, i.e., $Q = P \times \pi$. Thus, every even permutation $p$ may be interpreted as combinatorial map $(p, p \times \pi)$. Reversely, every combinatorial map with fixed inner involution $\pi$ is characterized by single even permutation. Proof is done.
\end{proof}

In this work we established that normalization of combinatorial map may be continued. We may choose augmenting numeration not only  for involution but for all combinatorial knot. By this we may use fact that combinatorial knot has graph theoretical invariant in correspondence, i.e., equivalence class of zigzag walks as one generalized zigzag walk.

\begin{defn} Let us say that combinatorial map has normalized knot if its knot may be written in form of augmenting corner numbers from $1$ to $2m$.
\end{defn}

It is easy to see that combinatorial map with normalized knot is normalized in ordinary sense too. Moreover, edges from involution $\rho$ become by normalization with augmented values too, namely, with every transposition in form $(2i,2i+1)$ , where the addition is performed in some cyclical notion, scilicet, becoming cycled in within orbit. Let see theorem \ref{th6} further. Only, now we loose the property that maps have even permutations in correspondence. Now only some subset of even permutations are maps with normalized knot.

It is easy to see that every map may be renumbered in order to become map with normalized knot.

We may need to use less strong notion of knot normalization. For that reason we define {\em partially normalized knot}.

\begin{defn} Normalized map has partially normalized knot if in knot $\mu$  pairs from involution $\pi$  have the same orientation, increasing or decreasing, for all orbits.
\end{defn}

Partially normalized knot have one orientation edges only from one edge involution whereas in fully normalized knot we require the same orientation for both edge involutions.

See example of map with normalized knot in figure \ref{fig2}.

\begin{figure}[h]
\begin{center}
\scalebox{.60}{\includegraphics{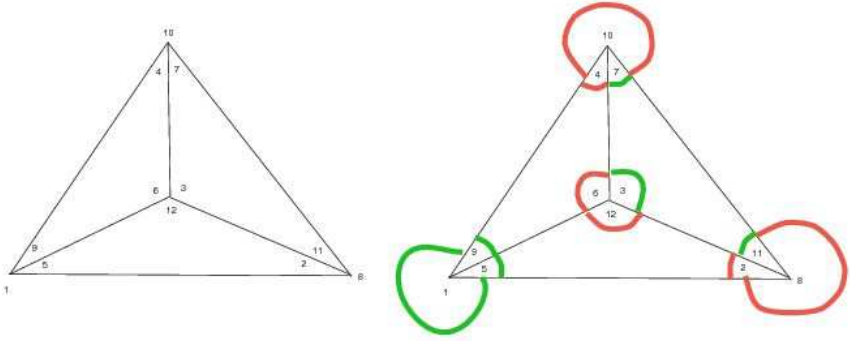}}
\caption{Example of combinatorial map with normalized knot. Knot $\mu$ is equal to $(1 2 3 4)(5 6 7 8)(9 \bar{0} \bar{1} \bar{2})$. Set of corners $C$ is partitioned into subsets of green corners $C_1 = \{1,3,5,7,9,\bar{1}\}$ and red corners $C_2 =  \{2,4,6,8,\bar{0},\bar{2}\}$.
$P = (1 9 5)(2 \bar{1} 8)(3 \bar{2} 6)(4 \bar{0} 7)$, $\alpha = (1 \bar{0} 5 2 9 6)(3 \bar{1} 7)(4 \bar{2} 8)$.
Three distinguished cycles  by fixing the knot in map are easy to be seen: odd numbered green cycles $(1 9 5)$ and $(3 7 11)$ and even numbered red cycle $(2 12 4 10 8)$. }
\label{fig5}
\end{center}
\end{figure}

\section{Main theorems}

Next theorem calculates values for edge rotation $\rho$ in case of normalized knot.

\begin{thm}\label{th6} Let in the process of perambulation of knot edge is visited as $k$-th in the orbit and orbit starts with $l_1$-th edge and ends with $l_2$-th edge.   By knot normalization edge  becomes equal to $$(k+l_1-1, mod(k-1,l_2-l_1)+l_1).$$
\end{thm}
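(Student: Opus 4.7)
The plan is to derive the formula directly from the definition of a normalized knot. By that definition, the orbit in question is the cycle $(l_1, l_1+1, l_1+2, \ldots, l_2)$, so the $j$-th corner visited during perambulation of the orbit is exactly $l_1 + j - 1$, and $j$ runs from $1$ to the orbit length $L = l_2 - l_1 + 1$. This is purely a matter of reading off what ``normalized'' means: the knot's orbit is an arithmetic progression visited in augmenting order.

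Next, because $\mu = \prmb{\pi}{\rho}$ alternates between $\pi$ and $\rho$ (as stipulated in the definition of combinatorial knot given in the introduction), every consecutive pair of visited corners is a transposition of one of these two involutions. The $k$-th edge traversed in the perambulation is therefore the pair formed by the $k$-th and $(k+1)$-th corners visited, with indices taken cyclically so that after position $L$ one returns to position $1$. The first coordinate of this pair is immediately $l_1 + k - 1$, which matches the first entry of the stated formula with no further work.

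For the second coordinate one must produce $l_1 + k$ when $k < L$ (an ordinary step inside the orbit) and $l_1$ when $k = L$ (the step that closes the orbit back onto its starting corner). Collapsing these two cases by modular reduction yields the expression $\mathrm{mod}(k-1, l_2 - l_1) + l_1$, giving the formula as stated. The whole argument thus reduces to substituting the normalized labeling into the alternating $\pi$-$\rho$ structure of $\mu$; no deeper structural ingredient is required beyond results already in the excerpt.

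The only genuine obstacle, and it is a mild one, is bookkeeping with the mod operation: one must fix an indexing convention for $k$ so that the wraparound step produces the edge $(l_2, l_1)$ rather than a spurious self-loop, and check that the orbit length $L$ is even (as follows from the proof of Theorem~3 together with the fact that $\pi$- and $\rho$-pairs alternate along each orbit), so that the alternation between $\pi$-pairs and $\rho$-pairs closes up consistently around the cycle. Once those conventions are pinned down, the formula is just a restatement of the definition.
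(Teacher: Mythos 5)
Your proof takes essentially the same route as the paper's: the normalized orbit is the consecutive cycle $(l_1, l_1+1, \ldots, l_2)$, the $k$-th edge begins at corner $k+l_1-1$, and the second corner is either augmented by one or wraps back to $l_1$ at the closing step. You are in fact slightly more explicit than the paper about the wraparound bookkeeping (noting that the stated expression $\mathrm{mod}(k-1,\,l_2-l_1)+l_1$ only reproduces the two cases under a suitable indexing convention), a point the paper's proof passes over in exactly the same informal way.
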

\begin{proof} Orbit starts with edges $(l_1,l_1+1)(l_1+1,l_1+2)$ and ends with edges $(l_2-1,l_2)(l_2,l_1)$. In assertion of theorem arbitrary  edge involution element in orbit is calculated taking into account that $k$-th  edge  in orbit starts with corner $k+l_1-1$, and next corner either is augmented by one or becomes equal to $l_1$.
\end{proof}

Let us consider features of combinatorial maps with partially normalized or fully normalized knot.

Let us consider a theorem in a weaker and stronger form.

\begin{thm}  By knot normalization corner set $C$ is partitioned into even and odd corner sets, i.e., $C_1$ contains odd corners and $C_2$ contains even corners by taking form of knot $\mu=\prmb \pi \rho$.
\end{thm}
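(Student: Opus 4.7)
The plan is to read off the partition $C = C_1 \sqcup C_2$ directly from the alternation pattern of $\pi$ and $\rho$ along one normalized orbit, and then to propagate the parity across all orbits so that odd/even labels match $C_1/C_2$ globally.

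First I would record two structural facts about any orbit of $\mu = \prmb{\pi}{\rho}$: by definition of the symbol, $\mu$ fires $\pi$ from a corner in $C_1$ (landing in $C_2$) and $\rho$ from a corner in $C_2$ (landing back in $C_1$), so consecutive edges of an orbit alternate $\pi, \rho, \pi, \rho, \ldots$; since $\pi$ and $\rho$ are fixed-point-free involutions this alternation never stalls and every orbit has even length. When the knot is normalized, each orbit is a run of consecutive integers $(l_1, l_1+1, \ldots, l_2)$ and these runs partition $\{1, \ldots, 2m\}$. Even length forces $l_2 - l_1$ to be odd, so $l_1$ and $l_2$ have opposite parities; since the very first orbit starts at $l_1 = 1$ and hence ends at an even $l_2$, a direct induction on the sequence of orbits shows $l_1$ odd and $l_2$ even for every orbit.

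Within one normalized orbit, the first step $l_1 \mapsto l_1 + 1$ is by $\pi$, placing $l_1 \in C_1$; the next step $l_1 + 1 \mapsto l_1 + 2$ is by $\rho$, placing $l_1 + 1 \in C_2$; and so on alternately. Hence the $k$-th element of the orbit lies in $C_1$ iff $k$ is odd. Because $l_1$ is odd by the induction, this $k$-th element equals $l_1 + k - 1$, whose parity matches that of $k$. The odd corners therefore sit in $C_1$ and the even corners in $C_2$. The main (and essentially only) nontrivial step will be the inter-orbit parity induction, i.e., checking that every orbit opens at an odd label; once that is in hand the even/odd conclusion is a direct reading of the pattern $\prmb{\pi}{\rho}$ along a normalized orbit.
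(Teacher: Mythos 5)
Your proposal is correct and follows essentially the same route as the paper: reading the alternation of $\pi$ and $\rho$ along the consecutively numbered corners of a normalized knot, with odd positions falling in $C_1$ and even positions in $C_2$. The only difference is that you make explicit the even-orbit-length and odd-starting-label induction across orbits, which the paper's shorter proof leaves implicit.
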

\begin{proof} Corners in normalized knot $\mu$ are numbered from $1$ to $2m$ with every next augmented by one. Each odd edge in sequence belongs to inner edge involution $\pi$ and each even edge belongs to edge involution $\rho$. If first corner belongs to $C_1$ and second to $C_2$  then alternatingly theorem's assertion follows.
\end{proof}

\begin{thm}  The set $C$ is partitioned into even and odd corner sets if and only if knot is partially normalized.
\end{thm}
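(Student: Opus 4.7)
The plan is to prove the biconditional by splitting into two implications and leveraging the structural identity $\pi = (1\,2)(3\,4)\cdots(2m{-}1,\,2m)$ of a normalized map, together with the rule encoded in $\mu = \prmb{\pi}{\rho}$: each $\pi$-step inside an orbit of $\mu$ transports a corner from $C_1$ to $C_2$, while each $\rho$-step does the opposite.

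First I would handle the direction \emph{partially normalized $\Rightarrow$ even/odd partition}, which is essentially a symmetric version of the preceding theorem. Partial normalization says that all $\pi$-transpositions traversed inside $\mu$ share a single orientation. Since every transposition of $\pi$ has the shape $(2i{-}1,\,2i)$, an increasing orientation sends the odd member to the even member at every $\pi$-step, while a decreasing orientation does the reverse. Combining this with $\pi\colon C_1 \to C_2$ forces either $C_1$ to consist entirely of odd corners and $C_2$ of even corners (the increasing case, already established by the previous theorem) or exactly the opposite (the decreasing case, which follows by the same argument with parities interchanged). In both cases $C$ is partitioned into odds and evens.

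For the converse direction I would reverse the observation. Assume $C$ splits into odd and even corners; without loss of generality take $C_1$ odd and $C_2$ even. Pick any $\pi$-step in any orbit of $\mu$: it sends an element of $C_1$ to an element of $C_2$, hence an odd corner to an even corner, and since the unique $\pi$-transposition incident to the odd corner $2i{-}1$ is $(2i{-}1,\,2i)$, the step is traversed in the increasing direction. Because this holds uniformly across all orbits, all $\pi$-transpositions in $\mu$ share the increasing orientation, and $\mu$ is partially normalized by definition. The case $C_1$ even, $C_2$ odd is identical and yields the decreasing orientation.

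The main potential pitfall is conceptual rather than computational: one must keep sharply distinct the notion of a $\pi$-pair as an abstract transposition $(2i{-}1,\,2i)$ and the notion of that pair as a \emph{directed} step inside the cyclic word of $\mu$. The word ``orientation'' in the definition of partial normalization is meaningful only after the alternating reading $\mu = \prmb{\pi}{\rho}$ has fixed the traversal direction; once this is internalised, the whole argument reduces to matching the parity partition of $C$ with the uniform increasing/decreasing direction of the $\pi$-pairs.
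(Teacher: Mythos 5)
Your proposal is correct and takes essentially the same route as the paper: both directions reduce to matching the uniform increasing/decreasing orientation of the $\pi$-pairs $(2i{-}1,\,2i)$ inside $\mu$ with the alternating $C_1$/$C_2$ structure of the knot. The only cosmetic difference is that you prove the converse directly, while the paper argues by contradiction (a single oppositely oriented $\pi$-edge would put an even corner into $C_1$); the underlying observation is identical.
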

\begin{proof} Let us assume that edges from $\pi$ are in knot oriented in one direction. Let be augmented, say. Then it starts with odd value and in zigzag walk tail of edge is added to subset $C_1$ and head to $C_2$. Then $C_1$ should receive only odd values, and $C_2$ -- only even values.

Let orientation of some edge from $\pi$ is directed otherwise: then its even tail are going to $C_1$ and we have come to contradiction.
\end{proof}

\begin{thm} Let knot be partially normalized. If for  corner $a$ both $a$ and $a^P$ are odd, or even, then edge $(a,a^{\rho})$ is cut edge, otherwise it is cycle edge.
\end{thm}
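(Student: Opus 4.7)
The plan is to reduce the question about the $\rho$-pair $\{a, a^\rho\}$ to a question about its $P$-conjugate $\pi$-pair, and then to read off cut versus cycle from the color-preserving factorization $P\pi_2 = \gamma_1\gamma_2$ established earlier.

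First I would unpack the identity $\rho = \pi^{P^{-1}}$. In the paper's right-action convention this reads $a^\rho = ((a^P)^\pi)^{P^{-1}}$, equivalently $(a^\rho)^P = \pi(a^P)$. Thus the map $x \mapsto x^P$ carries the $\rho$-pair $\{a, a^\rho\}$ bijectively onto the $\pi$-pair $\{a^P, \pi(a^P)\}$. Under this correspondence the cut/cycle classification $\pi = \pi_1\pi_2$ transports to $\rho$, and so the statement ``edge $(a, a^\rho)$ is cut'' amounts to ``$a^P$ belongs to a $\pi_1$-pair.''

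Next I would use $P = \gamma_1\gamma_2\pi_2$, which rearranges to $P\pi_2 = \gamma_1\gamma_2$; this is color-preserving because $\gamma_1$ acts on $C_1$ and $\gamma_2$ on $C_2$. Partial normalization, together with the preceding theorem, identifies $C_1$ with the odd corners and $C_2$ with the even corners, so color coincides with parity. Every transposition of $\pi$ swaps one odd and one even corner, and therefore so does every transposition of $\pi_1$ and of $\pi_2$. Apply this to $(a^P)^{\pi_2}$, which has the same parity as $a$ by color-preservation. If $\pi_2$ moves $a^P$, then $a^P$ must have parity opposite to $a$; if $\pi_2$ fixes $a^P$ (i.e., $a^P$ sits in a $\pi_1$-pair), then $a^P$ already has parity equal to $a$. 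Since every corner belongs to exactly one of $\pi_1$ or $\pi_2$, this is an equivalence: $a$ and $a^P$ share parity iff $a^P$ lies in a $\pi_1$-pair.

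Chaining the two equivalences delivers the theorem, with the cycle case following by negation. The only delicate point is the first step --- verifying that identifying the $\rho$-pair $\{a, a^\rho\}$ with the $\pi$-pair $\{a^P, \pi(a^P)\}$ through $P$-conjugation is the intended way to inherit the $\pi_1/\pi_2$ classification on $\rho$-edges; once that identification is fixed, the rest is bookkeeping with the color-preservation of $P\pi_2$ and the partial-normalization hypothesis.
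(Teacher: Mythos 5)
Your proof is correct and follows essentially the same route as the paper's: partial normalization makes parity coincide with the colour classes $C_1,C_2$, and the cut/cycle status of the edge at $a$ is then read off from how $P$ interacts with that colouring. The paper's own proof is just a terse two-line version of this (same parity $\Rightarrow$ same colour class $\Rightarrow$ cut edge, relying on the background facts about $P\pi_1$, $P\pi_2$ stated in the introduction); you supply the details it leaves implicit, namely the identification of the $\rho$-pair $\{a,a^{\rho}\}$ with its $\pi$-pair via $\rho=\pi^{P^{-1}}$ and the colour-preservation of $P\pi_2=\gamma_1\gamma_2$.
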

\begin{proof} If both $a$ and $a^P$ are odd or even they belong to $C_1$, otherwise one to $C_1$ and other to $C_2$.  In first case edge is cut edge, but in second case cycle edge.
\end{proof}

If in assertion of this theorem in place of vertex rotation $P$ we put face rotation $Q$ then \textit{cut edge} and \textit{cycle edge} in assertion should be interchanged.

\subsection{Renumeration of combinatorial map}

Let us have arbitrary combinatorial map $P$ and ask, how to renumerate it in order to have knot normalized in it. Let map $P$ have knot $\mu$, and new map with renumerated corners should be $P'$ with knot $\mu'$.

Let us denote with variable $I$ element from class of permutations  that has orbits of form $(k, k+1, ..., k+l-1)$ with length $l$, i.e., with elements augmented by one. Such permutation may be characterized by lengths of orbits. Thus, some constant permutation $I$ with $n$ orbits is determined by sequence of numbers $<l_1,...,l_n>$ where $l_i$ is length of $i$-th orbit and : $$I=(1 2 ...l_1)({l_1+1} {l_1+2}...{l_1+l_2})({l_1+l_2+1} ...) ..(\sum \limits_{i=1}^{n-1}{l_i}+1...) .$$

It is evident that normalized knot $\mu'$ should be equal with some permutation $I$ that has corresponding even lengths of orbits.

Further, let us find permutation $T$ that should serve as renumerator of corners for $P$ that should become normalized with respect to its knot. We must get $P^T=P'$. Also for knot the same permutation should work in the same way, scilicet, $\mu^T =\mu'=I$. It means that equation $\mu \times T = T \times I$ should hold.

\subsubsection{Calculation of transformation $T$}\label{sec511}

Let us define bijection $B$ that should correspond to knot $\mu$ that is built in the following way: arrange $\mu$ in the way that each orbit starts with edge from involution $\pi$ and concatenate orbits as substrings in a string and assign this string to bijection $B$ so that $i$-th element of $B$ maps to $i$-th element of the string. Then following theorem is true.

\begin{thm} Bijection $B$ considered as substitution is equal to reverse permutation $T^{-1}$.
\end{thm}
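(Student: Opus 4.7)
The plan is to unpack the definition of $B$ and the defining equation $\mu \times T = T \times I$ for $T$, check how each acts on individual corners, and observe that $B$ and $T$ are inverses. First, list the orbits of $\mu$ with each orbit beginning with a corner from $C_1$ (so that $\mu$ starts by applying $\pi$), say
\[
(s_1,s_2,\ldots,s_{l_1})(s_{l_1+1},\ldots,s_{l_1+l_2})\cdots,
\]
and form the concatenated string $s_1 s_2 \cdots s_{2m}$. By the definition given immediately preceding the theorem, $B$ is the substitution $i \mapsto s_i$, and by the orbit structure of $\mu$ one has $\mu(s_i) = s_{i+1}$ cyclically within each orbit.

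Next, I would locate the $T$ satisfying $\mu \times T = T \times I$. The permutation $I$ has orbits $(1,2,\ldots,l_1)(l_1+1,\ldots,l_1+l_2)\cdots$ whose lengths match those of $\mu$, and $I$ advances each $i$ to $i+1$ cyclically within its orbit. Conjugation carries equal-length cycles to equal-length cycles preserving cyclic order; once the first corner of each $\mu$-orbit is fixed, the conjugating permutation is determined. The rule ``each orbit starts with a $\pi$-edge'' in the construction of $B$ is exactly what selects these canonical starting corners. Unfolding $\mu \times T = T \times I$ on an arbitrary $s_i$ in the paper's multiplication convention, and using $\mu(s_i)=s_{i+1}$ together with $I(i)=i+1$, the equation reduces to $T(s_i) = i$, so $T$ is the substitution $s_i \mapsto i$.

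Its inverse $T^{-1}$ is then the substitution $i \mapsto s_i$, which is exactly $B$. Hence $B = T^{-1}$, as claimed.

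The main obstacle is purely notational: aligning the direction of composition in $\mu \times T = T \times I$ with the reading of ``$i$-th element of $B$ maps to $i$-th element of the string'' as $B(i)=s_i$, so that the inverse falls on $T$ rather than on $B$. A secondary point worth checking is that the canonical choice of starting corner in each $\mu$-orbit (the $\pi$-edge convention) really does match the first index of the corresponding $I$-orbit; otherwise one would obtain $T$ only up to cyclic rotations within orbits. Once these conventions are pinned down, the argument is the direct verification above.
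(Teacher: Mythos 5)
Your proposal is correct and follows essentially the same route as the paper: you verify element-wise that the string defining $B$ conjugates $I$ to $\mu$ (i.e., $T(s_i)=i$ satisfies $\mu\times T=T\times I$), which is exactly the paper's observation that $I^B=\mu$ together with $\mu^T=I$ forces $T=B^{-1}$. Your extra remarks on composition conventions and on the choice of starting corner in each orbit only make explicit what the paper leaves implicit (and what its Theorem~\ref{th12} accounts for).
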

\begin{proof} Let $B$ be considered as substitution denoted by the same letter. Then it is easy to see that $I^B$ should be equal to $\mu$ if $I$ is congruent with $\mu$, i.e., $\mu' = I$. But then $T$ should be equal to $B^{-1}$.
\end{proof}

\begin{thm}\label{th12} If $\mu$ has $k$ orbits there are $2^k k!$ ways to built $B$ from knot $\mu$ thus giving way to $2^k k!$ possible transformations $T$ for combinatorial map with this knot.
\end{thm}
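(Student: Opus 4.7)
The plan is to decompose the construction of $B$ into two independent sources of choice and count each separately. Since $B$ is built by listing the orbits of $\mu$ in some order as substrings, the first source is the ordering of the $k$ orbits in the concatenation. Each of the $k!$ orderings produces a different concatenated string, and hence a different $B$, because the image under $B$ of position $1$ lies in a different orbit of $\mu$ under different orderings. This accounts for the factor $k!$.

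For the second source, I would count, for a fixed orbit of length $2l_i$, the linear presentations as a substring beginning with an edge from $\pi$. The orbit alternates $\pi$- and $\rho$-transpositions, and because $(\pi\rho)^{-1}=\rho\pi$, traversal in one direction produces the pattern $\pi,\rho,\pi,\rho,\dots$ while traversal in the reversed direction produces $\rho,\pi,\rho,\pi,\dots$; restoring the ``starts with a $\pi$-edge'' convention in the reverse direction amounts to shifting the starting element by one. Thus each orbit admits exactly two canonical presentations, one per orientation, yielding the factor $2^k$ overall. Combining the two contributions gives $2^k\cdot k!$ distinct bijections $B$, and by the preceding theorem $T=B^{-1}$, so there are $2^k\cdot k!$ normalizing transformations as well.

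The main obstacle I anticipate is making precise the claim that, with the orbit's direction fixed, the remaining freedom of \emph{which} $\pi$-edge heads the substring collapses to a single canonical choice rather than contributing an additional factor of $l_i$. My plan for handling this is to show that cyclically shifting the starting $\pi$-edge within a $\mu$-orbit corresponds to a cyclic relabeling of the matched $I$-orbit, which by the augmenting form of $I$ reduces back to one of the two orientation choices already counted (the cycle $(s,s+1,\dots,s+2l_i-1)$ admits $2l_i$ cyclic rotations but only $2$ of them preserve the ``$\pi$ first, $\rho$ second'' parity structure up to reversal). Once this observation is in place, the independence of the two sources of choice is immediate, and the product $2^k k!$ follows.
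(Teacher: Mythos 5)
Your counting decomposition --- $k!$ for the order in which the orbits are concatenated and $2$ per orbit for its orientation --- is exactly the paper's proof, and that part matches. The trouble is the extra step you yourself single out as the main obstacle: the claim that, once an orbit's direction is fixed, the freedom of \emph{which} $\pi$-edge heads its substring collapses to a single canonical choice. Your proposed justification of this is incorrect. In an $I$-orbit $(s,s+1,\dots,s+2l_i-1)$ the $\pi$-pairs occupy the consecutive positions $(s,s+1),(s+2,s+3),\dots$, and a cyclic shift by \emph{any even} number of places maps this pattern to itself; hence $l_i$ of the $2l_i$ rotations preserve the ``$\pi$ first, $\rho$ second'' structure, not $2$. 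Equivalently, on the $\mu$ side: an orbit of length $2l_i$ contains $l_i$ edges of $\pi$, and beginning the substring at any one of them satisfies the construction's requirement ``each orbit starts with an edge from $\pi$''; the resulting strings are genuinely different, so they give different bijections $B$ and different transformations $T=B^{-1}$, all equally admissible (for a fixed orientation each of them satisfies $I^B=\mu$ on that block just as well as the one you would call canonical). So the within-orbit freedom does not reduce to the orientation choice, and taken literally the construction of $B$ yields $k!\prod_i 2l_i$ bijections, not $2^k k!$.

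The count $2^k k!$ is obtained only if one additionally fixes, for each orbit and each orientation, a canonical starting corner --- which is what the paper implicitly does when it says an orbit ``may be glued in string in two ways with two possible orientations'', never treating the starting point as a degree of freedom. Under that convention your argument collapses back to the paper's two-line count and is fine; without it, the statement you are trying to establish about the construction is stronger than what is true, and no repair of the parity-structure argument can save it, because distinct starting $\pi$-edges really do produce distinct $T$.
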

\begin{proof} Each orbit from $\mu$ may be glued in string in two ways with two possible orientations that gives $2^k$ ways for $k$ orbits. But oriented orbits may be combined into string in $k!$ ways. Doing all this independently gives altogether  $2^k k!$ possible transformations $T$ with  $2^k k!$  possible ways to transform map into knot normalized map.
\end{proof}

\begin{thm} Permutation $T$ belongs to class of maps from set $\Pi$ \cite{ze 96}, i.e., $T^\pi=T$.
\end{thm}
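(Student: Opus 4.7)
The plan is to translate the defining condition $T^\pi = T$ of the set $\Pi$ into a statement about the bijection $B = T^{-1}$ from Subsection \ref{sec511} and then read it off directly. Since $\pi$ is an involution, $T^\pi = T$ is the same as $T\pi T^{-1} = \pi$, which is equivalent to saying that $T$ sends every transposition of $\pi$ to a transposition of $\pi$. Dually, this amounts to showing that for every $i$ the pair $\{B(2i-1),B(2i)\}$ is itself a transposition of the original involution $\pi$.

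The key step is to exploit the alternating structure of knot orbits encoded by the form $\mu = \prmb{\pi}{\rho}$. Within any orbit of $\mu$ the elements alternate between $C_1$ and $C_2$, and consecutive transpositions alternate between $\pi$-edges (leaving $C_1$) and $\rho$-edges (leaving $C_2$). In particular every orbit has even length. By the convention used to build $B$ --- namely that each orbit starts with an edge from $\pi$ --- the first element of each orbit lies in $C_1$, so within that orbit the $\pi$-transpositions occupy internal positional pairs $(1,2),(3,4),\dots$ and the $\rho$-transpositions occupy $(2,3),(4,5),\dots$ When the orbits are concatenated into one string, each orbit being of even length, the parity of positions is preserved: the global positional pair $(2i-1,2i)$ always lies inside a single orbit at an internal $(2j-1,2j)$-pair, which by the previous sentence is a $\pi$-transposition of $\mu$. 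Hence $\{B(2i-1),B(2i)\}\in\pi$, which gives $B\pi B^{-1} = \pi$, and equivalently $T\pi T^{-1}=\pi$.

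The only detail requiring care is to confirm that none of the $2^k k!$ admissible choices from Theorem \ref{th12} spoil the argument. Permuting whole orbits clearly preserves their internal positional parity, and reversing an orbit that started with a $\pi$-edge still produces a substring whose consecutive odd-even pairs are $\pi$-transpositions, since reversal merely runs the alternation around the cycle in the opposite direction and the $\pi$-pairs remain positioned at $(1,2),(3,4),\dots$. Thus every admissible $T$ satisfies $T^\pi = T$, so $T \in \Pi$.
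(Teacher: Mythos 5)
Your proof is correct and follows essentially the same route as the paper: the paper's (much terser) argument likewise rests on the observation that, because each orbit of $\mu$ is written starting with a $\pi$-edge and has even length, the successive pairs $(B[2i-1],B[2i])$ of the concatenated string are transpositions of $\pi$, whence $B$ and hence $T=B^{-1}$ commute with $\pi$. You merely spell out the parity bookkeeping, the translation of $T^\pi=T$ into ``$B$ maps $\pi$-pairs to $\pi$-pairs,'' and the robustness under the $2^k k!$ choices more explicitly than the paper does.
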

\begin{proof} For substitution $B$ pair of successive elements $(B[2i-1],B[2i])$ always belongs either to inner edge involution $\pi$, or edge involution $\rho$. The same holds for reverse of $B$, that is equal to $T$. This gives equality $T^\pi=T$, and $T$ as map belongs to $\Pi$, i.e., set of self congruent maps.
\end{proof}

\subsection{Class of combinatorial maps with normalized knot}

Let us recall that combinatorial map $P$ may be expressed as $P = \mu \times \alpha = \mu \times A \times \pi_1$, where small $\alpha$ and big $A$ are so called knottings. In counting corresponding spaces, let us recall equality $(2m)!=(2m-1)!! \times 2^m \times m!$: $m!$ stands for size of space for $A$; $(2m-1)!! \times 2^m$ for knot space where $ 2^m$ stands for $\pi$ subsets, e.g., $\pi_1$, and $(2m-1)!!$ for variation of knot in $m!$ space with both changing orientation of edges.

Normalized combinatorial maps are $(2m-1)!$ in number. Maps with normalized knots we loose this simple expression. For this count we use formula $P = \mu \times A \times \pi_1$ assuming that $\mu$ is normalized. For normalized knots  may be counted ($\textbf{N}(\mu)$) using partition function \cite{part}:
$$pr(n,j)=1+\sum\limits_{k=2}^j{\sum\limits_i^{n \div k} {pr(n-i*k,k-1)}}$$ with $$pr(1,1)=1,$$
and partition number then is
$$p(n)=pr(n,n),$$
i.e., $$\textbf{N}(\mu)=p(m)\space \ k! \ 2^k,$$
using theorem \ref{th12}.

\begin{figure}[h]
\begin{center}
\scalebox{.40}{\includegraphics{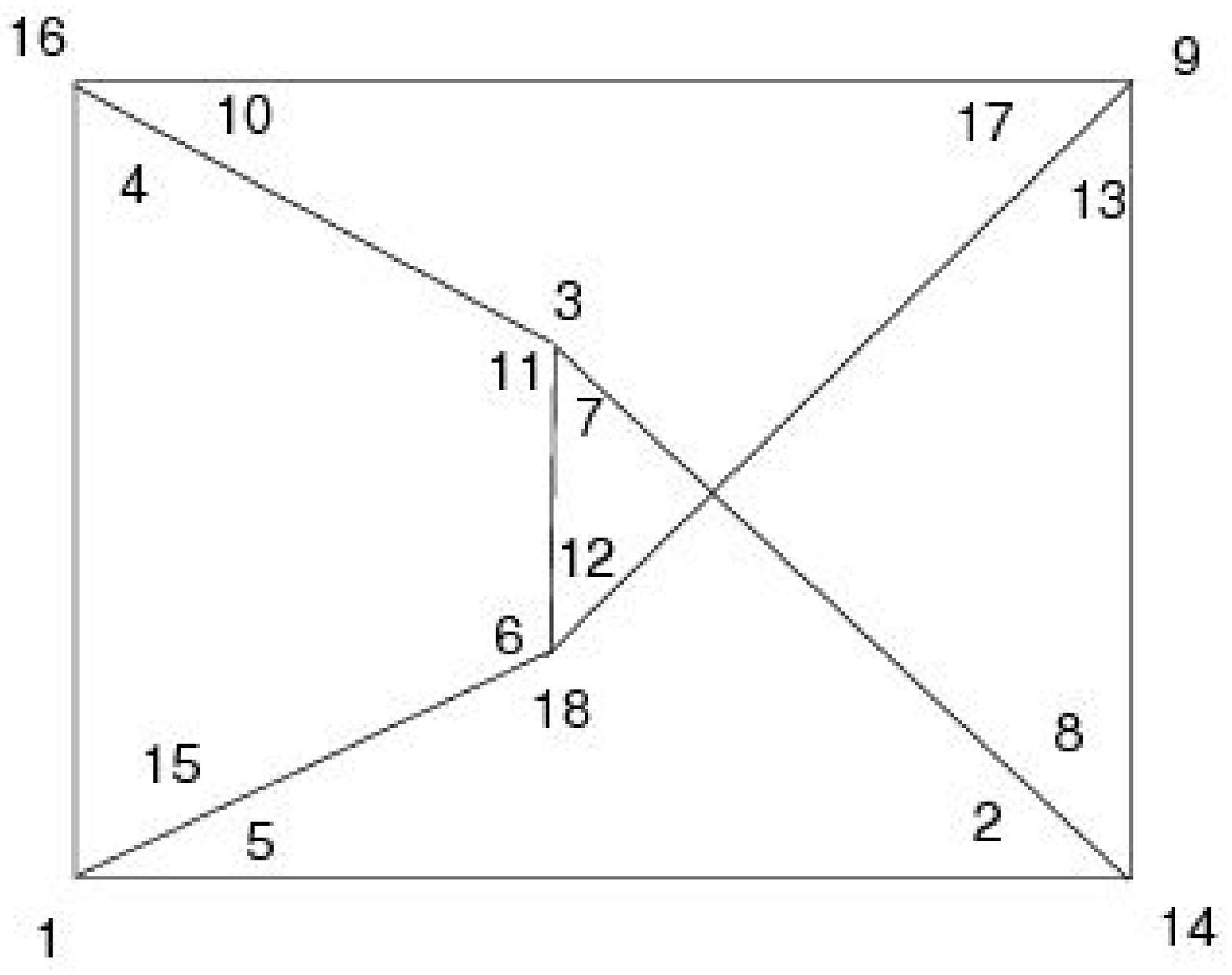}}
\caption{Example of combinatorial map with normalized knot for graph $K_{3,3}$. Knot's characteristic is $\langle2,5,2\rangle$. Edge structuring knot $\varepsilon$ is (1 2 5 6 15 16)(3 4 11 12 7 8)(9 10 17 18 13 14) of knot's characteristic $\langle3,3,3\rangle$. Symmetric knotting is (1 15 5)(2 16 6)(3 7 11)(4 8 12)(9 13 17)(10 14 18). Cycle edge set is empty, i.e., $\pi_1=\pi$.
}
\label{fig6}
\end{center}
\end{figure}

\begin{figure}[h]
\begin{center}
\scalebox{.40}{\includegraphics{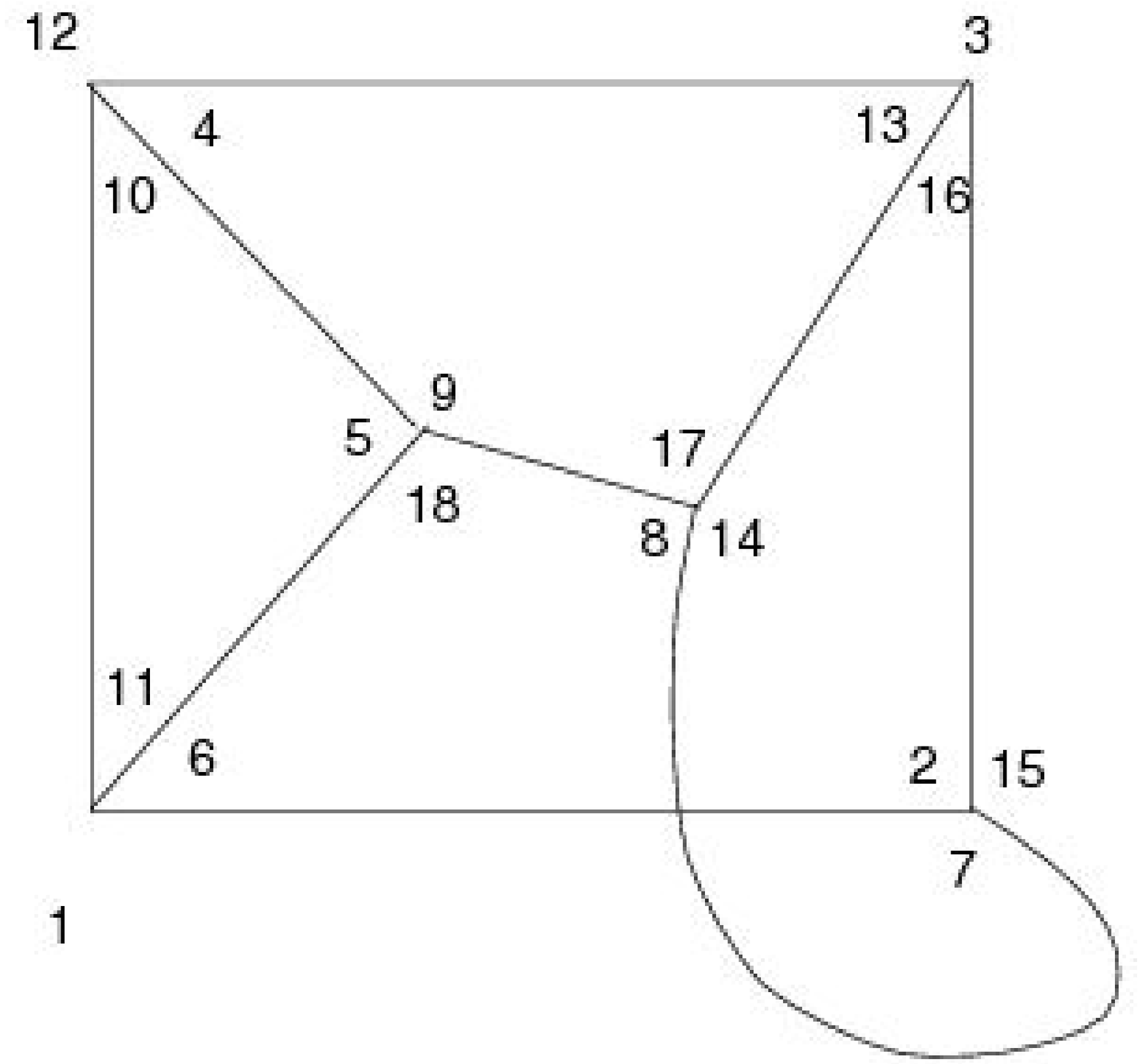}}
\caption{Example of combinatorial map with normalized knot. Knot's characteristic is $\langle5,4\rangle$. Edge structuring knot  $\varepsilon$ is (1 2 7 8 15 16 3 4 13 14 17 18 9 10 5 6 11 12). Symmetric knotting is (1 11 5 9 17 13 3 15 7)(2 12 6 10 18 14 4 16 8). Cut edges are these of $\pi_1$: (3 4)(7 8)(9 10)(11 12).}
\label{fig7}
\end{center}
\end{figure}

In work \cite{ze 08} we have showed that combinatorial map, say, $P$, may be expressed as multiplication of knots, i.e., $P=\mu \ \varepsilon^2 \ \pi_1$. Graph on surface may be characterized by multiplication of three knots. Knot $\mu$ is responsible for zigzag walk, knot $\varepsilon$ is caused by cycles arising from knot $\mu$ fixation, and $pi_1$ is cut edge involution. Thus, zigzag fixation causes fixed cycles and cut (and cycle) edges: this in combinatorial map theory setting gives that $\mu$ rotation causes two rotations, i.e., these of $\varepsilon$ and $pi_1$.

Besides, we may apply normalization of knot not only to $\mu$ but to $\varepsilon$ too.  Of course these two operations are not compatible, i.e., we may normalize either one or other, not both. Nevertheless, theorem is true.

\begin{thm} If $\mu$ or $\varepsilon$ is (fully) normalized other is partially normalized.
\end{thm}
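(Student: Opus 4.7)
The plan is to reduce the statement to the two earlier results of this section: the theorem asserting that a fully normalized knot $\kappa$ has $C_1^\kappa$ equal to the set of odd corners and $C_2^\kappa$ equal to the set of even corners, together with the iff-characterization stating that the bipartition $\{C_1,C_2\}$ of a knot agrees with the parity partition of $C$ exactly when that knot is partially normalized. The bridge between $\mu$ and $\varepsilon$ is to be the observation that both knots share the same bipartition of the corner set, i.e.\ $C_1^\mu=C_1^\varepsilon$ and $C_2^\mu=C_2^\varepsilon$.

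First I would verify this partition-sharing. By construction $\gamma_1$ is supported on $C_1^\mu$ and fixes $C_2^\mu$ pointwise, while $\pi$ interchanges $C_1^\mu$ and $C_2^\mu$. A direct check shows that the conjugate $\delta=\pi^{\gamma_1}=\gamma_1^{-1}\pi\gamma_1$ also interchanges $C_1^\mu$ and $C_2^\mu$: for $a\in C_2^\mu$ one has $a^{\gamma_1^{-1}}=a$, then $a^{\gamma_1^{-1}\pi}\in C_1^\mu$, and the final application of $\gamma_1$ preserves $C_1^\mu$; the argument on $C_1^\mu$ is symmetric. Since $\varepsilon$ is built by alternating $\pi$ on $C_1^\mu$ with $\delta$ on $C_2^\mu$, its orbits shuttle $C_1^\mu\to C_2^\mu\to C_1^\mu\to\cdots$, so the bipartition induced by $\varepsilon$ is exactly $\{C_1^\mu,C_2^\mu\}$.

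With the partitions identified, both implications are quick. Suppose $\mu$ is fully normalized. By the normalization theorem, $C_1^\mu$ is the set of odd corners and $C_2^\mu$ the set of even corners, and the sharing transfers the same odd/even description to $\varepsilon$'s bipartition. The iff-characterization of partial normalization then delivers that $\varepsilon$ is partially normalized. The converse is perfectly symmetric: if $\varepsilon$ is fully normalized, applying the normalization theorem to $\varepsilon$ gives $C_1^\varepsilon$ = odd corners, the sharing propagates this to $\mu$, and a second use of the iff-characterization produces the partial normalization of $\mu$.

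The main obstacle is the bridging claim that $\varepsilon$ and $\mu$ share a bipartition. This rests only on the support of $\gamma_1$ being $C_1^\mu$ and on $\pi$ interchanging the two halves, both already implicit in the definitions, but the conjugation chain for $\delta$ has to be traced carefully so that no element of $C_2^\mu$ sneaks into $C_1^\varepsilon$ (and vice versa). Once this step is in place, what remains is a clean cross-application of the two preceding theorems of this section.
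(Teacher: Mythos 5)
Your proposal is correct and follows the paper's own route: the paper's proof is precisely the remark that the partition $C=C_1\cup C_2$ is common to $\mu$ and $\varepsilon$, after which the two preceding theorems (normalized $\Rightarrow$ odd/even partition, and odd/even partition $\Leftrightarrow$ partially normalized) finish the argument. You merely supply the verification the paper leaves implicit, namely that $\delta=\pi^{\gamma_1}$ interchanges $C_1$ and $C_2$ so that $\varepsilon$ indeed induces the same bipartition.
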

\begin{proof} This  follows from fact that partitioning $C=C_1 \cup C_2$ is common for both operations of normalization.
\end{proof}

\section{Conclusions}

Graph on surface may be characterized by multiplication of three knots \cite{ze 09}. Knot $\mu$ is responsible for zigzag walk, knot $\varepsilon$ is caused by cycles arising from knot $\mu$ fixation, and $pi_1$ is cut edge involution. Thus, zigzag fixation causes fixed cycles and cut (and cycle) edges: this in combinatorial map theory setting gives that $\mu$ rotation causes two rotations, i.e., these of $\varepsilon$ and $\pi_1$.

Combinatorial map theory may be developed in way giving concise correspondence to graphs topological picture. In the same time our aim was to demonstrate that rotational relations should be considered as more fundamental than traditional set theoretical settings in conventional graph theory.

Combinatorial maps may be used in building computer programs for graph theoretical applications. Data structures may be built using rotational functions of combinatorial maps. We have used these aspects to build algorithms used in calculations mentioned in works  \cite{ze1 04,ze 09}.

\section{Acknowledgements}

I would like to thank Paulis \c{K}ikusts for discussions of combinatorial map theory aspects at regular seminaries of graph theory at Institute of Mathematics and Computer science.

\end{document}